\documentclass[12pt,leqno]{amsart}
\usepackage{amsmath,amssymb,amsfonts}
\usepackage{eucal,enumerate}
\usepackage{xypic,graphicx,psfrag}
\usepackage{mathrsfs}
\usepackage{hyperref}

\usepackage{color} 

\pagestyle{plain}
\setlength{\textwidth}{6.5in}
\setlength{\oddsidemargin}{0.0in}
\setlength{\evensidemargin}{0.0in}
\setlength{\textheight}{9in}
\setlength{\topmargin}{-.5in}

\hyphenation{night-rider}

\newtheorem{lem}{Lemma}[section]

\newtheorem{prop}[lem]{Proposition}
\newtheorem{thm}[lem]{Theorem}

\theoremstyle{definition}
\newtheorem{exam}[lem]{Example}

\numberwithin{equation}{section}
\numberwithin{table}{section}
\numberwithin{figure}{section}

\allowdisplaybreaks

\renewcommand{\phi}{\varphi} 
\renewcommand{\epsilon}{\varepsilon}

\newcommand\setm{\setminus}
\newcommand\inv{^{-1}}
\newcommand\transpose{^{\text{T}}}
\newcommand\codim{\operatorname{codim}}
\newcommand\rk{\operatorname{rk}}

\newcommand\cA{\mathscr{A}}		% script for arrangements et al.
\newcommand\cB{\mathcal{B}}

\renewcommand\cH{\mathcal{H}}	% \cH seems to be defined by xypic.	\mathcal for subspaces
\newcommand\cP{\mathcal{P}}
\newcommand\cS{\mathscr{S}}
\newcommand\cU{\mathcal{U}}	% \cU for bishops subspace	\mathcal for subspaces

\newcommand\bbR{\mathbb{R}}
\newcommand\bbZ{\mathbb{Z}}
\newcommand\pB{\mathbb B}	% Bishop

\newcommand\ba{\mathbf a}
\newcommand\bfb{\mathbf b}
\newcommand\bc{\mathbf c}
\newcommand\bd{\mathbf d}
\newcommand\bx{\mathbf x}
\newcommand\by{\mathbf y}
\newcommand\bz{\mathbf z}

\newcommand\Alpha{\mathsf{A}}
\newcommand\Beta{\mathsf{B}}
\newcommand\Eta{\mathsf{H}}
\renewcommand\S{\mathsf{\Sigma}}
\newcommand\G{\mathsf{\Gamma}}
\newcommand\Ps{\mathsf{\Psi}}

\newcommand\Kot{Kot\v{e}\v{s}ovec}
\newcommand\cube{[0,1]^{2q}}

\allowdisplaybreaks

\begin{document}

% CROSS-REFERENCE COMMANDS
\newcommand\xrefcomment[1]{\{#1\}}	% Cross-reference name revealed
\renewcommand\xrefcomment[1]{}		% Cross-reference name concealed

%All numbers are verified as of 7 Sept 2016; must be reverified if changed.
\newcommand\B{V.5.1\xrefcomment{B}}		% Section {B} -- ** NUMBER MAY CHANGE ** Updated 20180618
\newcommand\BB{5.1\xrefcomment{BB}}		% Section {B} -- ** NUMBER MAY CHANGE ** Updated 20180618
\newcommand\Phvdiag{III.3.1\xrefcomment{P:hvdiag}}	% {P:hvdiag}  verified 20180618

%%%%%%%%%%%%%%%%%%%%%%%%%%%%%%%%%%%%%%%%%%%%%
\thispagestyle{empty}

\title{A $q$-Queens Problem \\
VI.  The Bishops' Period 
\\[10pt]  
\today}

\author{Seth Chaiken}
\address{Computer Science Department\\ The University at Albany (SUNY)\\ Albany, NY 12222, U.S.A.}
\email{\tt sdc@cs.albany.edu}

\author{Christopher R.\ H.\ Hanusa}
\address{Department of Mathematics \\ Queens College (CUNY) \\ 65-30 Kissena Blvd. \\ Queens, NY 11367-1597, U.S.A.}
\email{\tt chanusa@qc.cuny.edu}

\author{Thomas Zaslavsky}
\address{Department of Mathematical Sciences\\ Binghamton University (SUNY)\\ Binghamton, NY 13902-6000, U.S.A.}
\email{\tt zaslav@math.binghamton.edu}

\begin{abstract}
The number of ways to place $q$ nonattacking queens, bishops, or similar chess pieces on an $n\times n$ square chessboard is essentially a quasipolynomial function of $n$ (by Part~I of this series).  The period of the quasipolynomial is difficult to settle.  Here we prove that the empirically observed period 2 for three to ten bishops is the exact period for every number of bishops greater than 2.  The proof depends on signed graphs and the Ehrhart theory of inside-out polytopes.
\end{abstract}

\subjclass[2010]{Primary 05A15; Secondary 00A08, 05C22, 52C07, 52C35.
%{00A08 Recreational mathematics. 05A15 Exact enumeration problems, generating functions. 05C22 Signed and weighted graphs. [Signed, gain and biased graphs]  52C07 Lattices and convex bodies in n dimensions.  52C35 Arrangements of points, flats, hyperplanes.}
}

\keywords{Nonattacking chess pieces, Ehrhart theory, inside-out polytope, arrangement of hyperplanes, signed graph}

\thanks{Version of \today.}
\thanks{The first two authors thank the very hospitable Isaac Newton Institute for facilitating their work on this project.  The third author gratefully acknowledges support from PSC-CUNY Research Awards PSCOOC-40-124, PSCREG-41-303, TRADA-42-115, TRADA-43-127, and TRADA-44-168.}

\maketitle
\pagestyle{myheadings}
\markright{\textsc{A $q$-Queens Problem. VI \quad \today}}\markleft{\textsc{Chaiken, Hanusa, and Zaslavsky}}

%%%%%%%%%%%%%%%%%%%%%%%%%%

\section{Introduction}\label{intro}

The famous $n$-Queens Problem is to count the number of ways to place $n$ nonattacking queens on an $n\times n$ chessboard.  That problem has been solved only for small values of $n$; there is no real hope for a complete solution.  In this series of papers we treat a more general problem wherein we place $q$\label{d:q} identical pieces like the queen or bishop on an $n\times n$ square board and we seek a formula for $u(q;n)$,\label{d:indistattacks} the number of ways to place them so that none attacks another.  The piece may be any one of a large class of traditional and fairy chess pieces called ``riders'', which are distinguished by the fact that their moves have unlimited distance.  
We proved in Part~I \cite{QQs1} that in each such problem the number of solutions, times a factor of $q!$, is a quasipolynomial function of $n$; that is, $q!u(q;n)$ is given by a cyclically repeating sequence of polynomials in $n$ and $q$, the exact polynomial depending on the residue class of $n$ modulo some number $p$\label{d:p} called the \emph{period} of the function; and furthermore that each coefficient of the quasipolynomial is a polynomial function of $q$.  
Here we prove that for three or more bishops the period is always exactly 2.\footnote{This paper was originally announced as Part~V, in Parts~I and II.}  This period was previously observed by \Kot\ for $3 \leq q \leq 10$ as a result of his extensive computations for five to ten bishops, added to previous work by Fabel for three and four bishops (see \cite[third ed., pp.\ 126--129 for $q\leq6$; 6th ed., pp.\ 234--241, 254--257 for $q\leq10$]{ChMath}).

The number of nonattacking placements of $q$ unlabelled bishops on an $n\times n$ board is denoted by $u_\pB(q;n)$.  The number for labelled bishops is therefore $q!u_\pB(q;n)$.

\begin{thm}\label{T:bishopsperiod}
For $q \geq 3$, the quasipolynomial $q!u_\pB(q;n)$ involved in counting the nonattacking positions of $q$ bishops on an $n\times n$ board has period equal to $2$.  For $q<3$ the period is $1$.
\end{thm}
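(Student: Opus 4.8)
The plan is to identify $u_\pB(q;n)$ with the Ehrhart quasipolynomial of the inside-out polytope $(\cube,\cA_\pB)$, where $\cA_\pB$ is the bishops' hyperplane arrangement with hyperplanes $x_i-y_i=x_j-y_j$ and $x_i+y_i=x_j+y_j$ for $1\le i<j\le q$ (see \Liop), and then to determine the period by controlling the denominators of the vertices of the regions of $(\cube,\cA_\pB)$. By the period bound (\Ttypenumber), that period divides the least common multiple of the denominators of all those vertices, so it suffices to prove that for $q\ge 3$ every vertex is half-integral --- giving period dividing $2$ --- and that the period is genuinely $2$, while for $q<3$ it is $1$. The case $q<3$ is immediate: $u_\pB(0;n)=1$ and $u_\pB(1;n)=n^2$ are polynomials, and for $q=2$ one checks via \Lvertexgeo (or directly) that every vertex of $([0,1]^4,\cA_\pB)$ is a lattice point, the only interesting configuration being the intersection of $x_1-y_1=x_2-y_2$ with $x_1+y_1=x_2+y_2$, which forces the two bishops to coincide and hence to sit at a corner of the square.

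For the half-integrality when $q\ge3$, I would pass to the diagonal coordinates $d_i=x_i-y_i$ and $s_i=x_i+y_i$. In these coordinates $\cA_\pB$ becomes the product of the two braid arrangements $\{d_i=d_j\}$ and $\{s_i=s_j\}$, and each facet of the cube becomes one of the four relations $s_i=\pm d_i$, $s_i=\pm d_i+2$; the relevant coincidence patterns are exactly those enumerated in Part~III (\Phvdiag, \Ctypestwothree). A vertex is the unique solution of a rank-$2q$ system built from these relations, and I would analyze it on the signed bipartite graph whose nodes are the distinct values of the $d_i$ and of the $s_i$ and whose edges record the facet relations, each carrying the sign of the map $d\mapsto\pm d+c$ that defines it ($c\in\{0,2\}$). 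Since each such map is an invertible affine map, a connected component of this graph that is a forest retains a degree of freedom; so at an actual $0$-dimensional vertex every component must contain a cycle whose edge signs multiply to $-1$ (an unbalanced cycle, in the signed-graph sense), and such a cycle pins the value of one $d_i$ --- hence of the whole component --- to $c/2$ with $c$ an even integer, that is, to an integer. Therefore $d_i,s_i\in\bbZ$ for all $i$, so $x_i=(d_i+s_i)/2$ and $y_i=(s_i-d_i)/2$ lie in $\tfrac12\bbZ$, and the period divides $2$; the ``no further halving'' here is exactly a consequence of the denominator estimates of \B.

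For the reverse inequality --- period exactly $2$ once $q\ge3$ --- the first step is to exhibit a vertex that really has denominator $2$: put bishop $1$ at $(\tfrac12,\tfrac12)$, bishops $2$ and $3$ at $(1,0)$ and $(0,0)$, and the remaining $q-3$ bishops at distinct corners of the board; this point is cut out by $x_1+y_1=x_2+y_2$, $x_1-y_1=x_3-y_3$ and enough cube facets, and it is the vertex of a full-dimensional region. But a single denominator-$2$ vertex does not force period exactly $2$, so the real work is to prove non-polynomiality. For that I would compute explicitly the lowest-order Ehrhart coefficient of $u_\pB(q;n)$ that can ``see'' such a configuration --- the coefficient of $n^{2q-k}$ for the smallest relevant $k$ --- by combining the inside-out Ehrhart coefficient formulas with the intersection and denominator data of \B and the type counts of Part~III, and show that its value for $n$ even differs from its value for $n$ odd by a polynomial in $q$ that vanishes for $q<3$ and is a positive multiple of $\binom{q}{3}$ (the choice of the three mutually frustrated bishops) for $q\ge3$; by the coefficient-polynomiality statement \coeffs this single nonzero difference polynomial then settles every $q\ge 3$ at once. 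The main obstacle is precisely this last computation: isolating, within the full Ehrhart coefficient, the contribution of the ``three bishops on a mutually unbalanced pair of diagonals'' intersections from all the integral ones, which requires the detailed Part~III classification of intersection types together with the denominator bookkeeping of Part~IV.
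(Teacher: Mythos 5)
Your upper-bound argument (period divides $2$) is essentially correct and is in substance the paper's own proof in different clothing: in the coordinates $d_i=x_i-y_i$, $s_i=x_i+y_i$ the attack equations are two braid arrangements and the cube facets become $s_i=\pm d_i+c$ with $c\in\{0,2\}$; uniqueness of the vertex forces every component of your value graph to contain an unbalanced cycle, whose composite affine map $t\mapsto -t+(\text{even})$ pins that component's values at integers, and integrality propagates along edges, so $d_i,s_i\in\bbZ$ and each $z_i$ is integral or $(\tfrac12,\tfrac12)$. The paper reaches the same conclusion through the clique graph $C_\cU$ and the observation that the fixation system has as coefficient matrix a nonsingular signed-graph incidence matrix with even right-hand sides, citing half-integrality of such systems (Lemma~\ref{L:halfintegral}); your unbalanced-cycle computation replaces that citation and is self-contained. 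Two small slips: the period-divides-denominator bound is from the Ehrhart theory of inside-out polytopes (\cite{IOP}, Lemma~\Liop), not Theorem~\Ttypenumber, and the appeal to the ``denominator estimates of \B'' is not needed (or meaningful) for the half-integrality you already proved.

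The genuine gap is the lower bound, i.e.\ that the period is exactly $2$ for $q\ge3$. You correctly note that exhibiting one vertex of denominator $2$ (your configuration with $z_1=(\tfrac12,\tfrac12)$) proves nothing, since the period only divides the denominator; but the replacement you propose --- extracting the even/odd difference of the first Ehrhart coefficient that sees the half-integral intersections and showing it is a positive multiple of $\binom{q}{3}$ for all $q\ge3$ --- is exactly the step you do not carry out, and you yourself flag it as ``the main obstacle.'' As written this is a plan, not a proof, and it is the delicate part: one must show a fixed coefficient's even-$n$ and odd-$n$ constituents differ for \emph{every} $q\ge3$, not merely for some $q$. The paper closes this half by quoting the already-proved Theorem~\Phvdiag\ of Part~III: the coefficients $\gamma_i$ are constant for $i<6$ while $\gamma_6$ has period exactly $2$, which together with the denominator bound forces period $2$ whenever $q\ge3$ (and gives period $1$ for $q<3$; your direct check of $q\le2$ via integral vertices is fine). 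To complete your argument you must either invoke \Phvdiag\ as the paper does or actually perform the coefficient computation you sketch; without one of these, ``period equal to $2$'' is not established.
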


To get our results we treat non-attacking configurations as lattice points $\bz:=(z_1,\ldots,z_q)$, $z_i=(x_i,y_i)$, in a $2q$-dimen\-sion\-al inside-out polytope (see Section~\ref{partI}).  The Ehrhart theory of inside-out polytopes (from \cite{IOP}) implies quasipolynomiality with polynomials of degree $2q$ and that the period divides the least common multiple of the denominators of the coordinates of vertices of the inside-out polytope.  We  find the structure of these coordinates explicitly: in Lemma~\ref{L:bishopsdenom} we show that a vertex of the bishops' inside-out polytope has each $z_i \in \{0,1\}^2$ or $z_i = (\frac12,\frac12)$.  From that, along with a formula from Part~III \cite{QQs3} for the coefficient of $n^{2q-6}$ that implies the period is even if $q\geq3$,  Theorem~\ref{T:bishopsperiod} follows directly.

\medskip
One reason to want the period is a computational method for discovering $u(q;n)$.  To find it (for a fixed number $q$ of pieces) one can count solutions as $n$ ranges from 1 up to some upper limit $N$ and interpolate the counting quasipolynomial from the resulting data.  That can be done if one knows the degree of the quasipolynomial, which is $2q$ by Lemma~I.2.1 (that is, Lemma 2.1 of Part I), and the period, for which there is no known general formula; then $N=2qp$ suffices (since the leading term is $n^{2q}/q!$ by general Ehrhart theory; see Lemma~I.2.1).  Evidently, knowing the period is essential to knowing the right value of $N$, if the formula is to be considered proven.  
In general, for a particular rider piece and number $q$ it is very hard to find the period; its value is known only for trivial pieces or very small values of $q$.  
In contrast, Theorem~\ref{T:bishopsperiod} gives the exact period for bishops, and it follows that to find the exact number of placements of $q$ bishops it suffices to compute only $4q$ values of the counting function.

The reader may ask why we do not seek the complete formula for bishops placements in terms of both $n$ and $q$.  
Remarkably, there is a simple such formula, due in essence to Arshon in a nearly forgotten paper \cite{Arshon} and completed by \Kot\ \cite[6th ed., pp.\ 244, 254--257]{ChMath}.  We restated this expression in Section~\B\ (that is, Section~\BB\ of Part V \cite{QQs4b}).  The trouble is that it is not in the form of a quasipolynomial, so that, for instance, we could not use it to obtain the number of combinatorial types of nonattacking configuration, which by Theorem~I.5.3 is its evaluation at $n=-1$.  We cannot even deduce the period from the Arshon formula.%
\footnote{Stanley in \cite[Solution to Exercise 4.42]{EC1-2} says the period is easily obtained from Arshon's formula, which has one form for even $n$ and another for odd $n$; but we think it is not that easy.}
So there is reason to seek the general quasipolynomial $q!u_\pB(q;n)$ for every number $q$.  
The simple reason we do not seek to do so is that we have not found a way to do it.  That remains an open problem whose solution would reveal the full character of the dependence of $u_\pB(q;n)$ on its two arguments.  
This has not yet been discovered for any rider---other than the mathematically trivial rook.

\medskip
After necessary mathematical background in the next two sections, we prove Theorem~\ref{T:bishopsperiod} in Section~\ref{Bperiod}, applying the geometry of the inside-out polytope for bishops and the properties of signed graphs, which we introduce in Sections~\ref{partI} and \ref{sg}, respectively.  
We conclude with research questions.  For the benefit of the authors and readers, we append a dictionary of the notation in this paper.

%%%%%%%%%%%%%%%%%%%%%%%%%%%%

\section{Essentials from Parts I and II}\label{partI}  

We build upon the counting theory of previous parts as it applies to the square board, from Part~II \cite{QQs2}.  We summarize essential aspects here.  First, we specialize our notation to $q$ nonattacking bishops $\pB$\label{d:B} on a square board.  We assume that $q > 0$.

The full expression for the number of nonattacking configurations of unlabelled bishops is 
\begin{equation*}
u_\pB(q;n) = \gamma_{0}(n) n^{2q} + \gamma_{1}(n) n^{2q-1} + \gamma_{2}(n) n^{2q-2} + \cdots + \gamma_{2q}(n) n^0,
\label{d:gamma}
\end{equation*}
where each coefficient $\gamma_i(n)$ varies periodically with $n$, and for labelled pieces the number is $o_\pB(q;n)$\label{d:distattacks}, which equals $q!u_\pB(q;n)$.  (The coefficients also depend on $q$ but we suppress that in the notation because only the variation with $n$ concerns us here.)

The $n \times n$ board consists of the integral points in the interior $(n+1) (0,1)^2$ of an integral multiple $(n+1) [0,1]^2$ of the unit square $\cB = [0,1]^2 \subset \bbR^2$,\label{d:Board} or equivalently, the $1/(n+1)$-fractional points in $(0,1)^2$.  Thus, the board consists of the points $z=(x,y)$ for integers $x,y = 1,2,\ldots,n$.  

A \emph{move} is the difference between a new position and the original position.  
The bishop has moves given by all integral multiples of the vectors $(1,1)$ and $(1,-1)$, which are called the \emph{basic moves}.\label{d:mr}  
(Note that for a move $m=(c,d)$, the slope $d/c$ contains all necessary information and can be specified instead of $m$ itself.)  
A bishop in position $z=(x,y)$\label{d:z} may move to any location $z + \mu m$ with $\mu \in \bbZ$ and a basic move $m$, provided that location is on the board.  

A \emph{configuration} is the vector $(z_1,z_2,\ldots,z_q)$ of positions of all $q$ bishops.  The constraint on a configuration is that no two pieces may attack each other, or to say it mathematically, when there are pieces at positions $z_i$ and $z_j$, then $z_j-z_i$ is not a multiple of any basic move $m$.  

The object on which our theory relies is the \emph{inside-out polytope} $(\cP,\cA_\pB)$, where $\cP$ is the hypercube $\cube$ and $\cA_\pB$ is the \emph{move arrangement}\label{d:AP} for bishops.  The move arrangement is a finite set of hyperplanes whose members are the \emph{move hyperplanes} or \emph{attack hyperplanes},
$$
\cH^\pm_{ij} := \{ \bz \in \bbR^{2q} : (y_j-y_i)=\pm(x_j-x_i) \}.
\label{d:slope-hyp}
$$ 
Each attack hyperplane contains the configuration points $z = (z_1,z_2,\ldots,z_q) \in \bbZ^{2q}$ in which bishops $i$ and $j$ attack each other.  (The pieces in a configuration are labelled 1 through $q$ to enable effective description.)  
The \emph{intersection lattice} of $\cA_\pB$ is the set of all intersections of subsets of the move arrangement, ordered by reverse inclusion.  These intersection subspaces are the heart of our method.

%%%%%%%%%%%%%%%%%%%%%%%%%%%%

\section{Signed Graphs }\label{sg}

The signed graph we employ to describe an intersection subspace efficiently is a special case of the slope graph from Section~I.3.3.  The fact that the bishops' two slopes are $\pm1$ makes it possible to apply the well-developed theory of signed graphs.

A graph is $\G = (N,E)$,\label{d:graph} with node set $N$ and edge set $E$.  It may have multiple edges but not loops.  A \emph{$1$-forest} is a graph in which each component consists of a tree together with one more edge; thus, each component contains exactly one circle.  A spanning 1-forest is a spanning subgraph (it contains all nodes) that is a 1-forest.  The notation $e_{ij}$ means the edge has end nodes $v_i$ and $v_j$.

A \emph{signed graph}, $\S = (N,E,\sigma)$,\label{d:sg} is a graph in which each edge $e$\label{d:edge} is labelled $\sigma(e) = +$ or $-$.\label{d:sigma}  In a signed graph, a circle (cycle, circuit) is called \emph{positive} or \emph{negative} according to the product of its edge signs.  A \emph{signed circuit} is either a positive circle or a connected subgraph that contains exactly two circles, both negative.  A node $v$\label{d:node} is \emph{homogeneous} if all incident edges have the same sign.  We generally write $q := |N|$\label{d:qS} because the nodes correspond to the bishops in a configuration.

Let $c(\S)$ denote the number of components of a signed (or unsigned) graph and $\xi(\S) := |E|-|N|+c(\S)$,\label{d:cyclo} the cyclomatic number of the underlying unsigned graph.  

The \emph{incidence matrix} of $\S$ is the $|N| \times |E|$ matrix $\Eta(\S)$\label{d:Eta} ($\Eta$ is ``Eta'') such that, in the column indexed by edge $e$, the elements are $\eta(v,e) = \pm1$ if $v$ is an endpoint of $e$ and $= 0$ if it is not, with the signs chosen so that, if $v_i$ and $v_j$ are the endpoints, then $\eta(v_i,e)\eta(v_j,e) = -\sigma(e)$ \cite[Section~8A]{SG}.  That is, in the column of a positive edge there are one $+1$ and one $-1$, while in the column of a negative edge there are two $+1$'s or two $-1$'s.  
The \emph{rank} of $\S$ is the rank of its incidence matrix.\label{d:rk}  From \cite[Theorem~5.1(j)]{SG} we know a formula for the rank: $\rk(\S) = |N| - b(\S)$, where $b(\S)$\label{d:bSigma} 
is the number of components in which there is no negative circle.  
This rank function applied to spanning subgraphs makes a matroid $G(\S)$ on the edge set of $\S$ \cite{SG}.  
An unsigned graph $\G$ acts as if it is an all-positive signed graph; therefore its incidence matrix has rank $\rk(\G) = |N| - c(\G)$ where $c(\G)$\label{d:comp} 
is the number of components and the corresponding matroid $G(\G) := G(+\G)$ is the cycle matroid of $\G$.  

From this and \cite[Theorem~8B.1]{SG} we also know that $\Eta(\S)$ has full column rank if and only if $\S$ contains no signed circuit and it has full row rank if and only if every component of $\S$ contains a negative circle.  A signed graph that has both of these properties is necessarily a 1-forest in which every circle is negative.  

\label{d:clique}
A \emph{positive clique} in $\S$ is a maximal set of nodes that are connected by positive edges; equivalently, it is the node set of a connected component of the spanning subgraph $\S^+$ formed by the positive edges.  A \emph{negative clique} is similar.  Either kind of set is called a \emph{signed clique}.  We call them ``cliques'' (in a slight abuse of terminology) because the signed cliques of a graph do not change if we complete the induced positive subgraph on a positive clique, and similarly for a negative clique.  
A homogeneous node $v$ gives rise to a singleton signed clique with the sign not represented by an edge at $v$; if $v$ is isolated it gives rise to two singleton cliques, one of each sign.

\label{d:posnegcn}
The number of positive cliques in $\S$ is $c(\S^+)$ and the number of negative cliques is $c(\S^-)$.  
Let $\Alpha(\S) := \{A_1,\ldots,A_{c(\S^+)}\}$ and $\Beta(\S) := \{B_1,\ldots,B_{c(\S^-)}\}$ (read ``Alpha'' and ``Beta'') be the sets of positive and negative cliques, respectively.  Since each node of $\S$ is in precisely one positive and one negative clique, we can define a bipartite graph $C(\S)$,\label{d:CS} called the \emph{clique graph} of $\S$, whose node set is $\Alpha(\S) \cup \Beta(\S)$ and whose edge set is $N$, the endpoints of the edge $v_i$ being the cliques $A \in \Alpha(\S)$ and $B \in \Beta(\S)$ such that $v_i \in A \cap B$.

Let us call an edge $e$ \emph{redundant} if $\S \setm e$ ($\S$ with $e$ deleted) has the same signed cliques as does $\S$, and call $\S$ \emph{irredundant} if it has no redundant edges, in other words, if each signed clique has just enough edges of its sign to connect its nodes.  
A signed graph is irredundant if and only if both $\S^+$ and $\S^-$ are forests.  For example, a signed forest is irredundant.  
Any signed graph can be reduced to irredundancy with the same signed cliques by pruning redundant edges one by one.

\begin{lem}\label{L:sclique}
If $\S$ is a signed graph with $q$ nodes, then $|\Alpha(\S)|+|\Beta(\S)| = 2q - [\rk(\S^+) + \rk(\S^-)]$.  
If $\S$ is irredundant, then $|\Alpha(\S)|+|\Beta(\S)| = 2q - |E| = q + c(\S) - \xi(\S).$  In particular, a signed tree has $q+1$ signed cliques.
\end{lem}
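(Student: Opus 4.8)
The plan is to compute $\rk(\S^+)$ and $\rk(\S^-)$ directly, since $\S^+$ and $\S^-$ are unsigned graphs (all-positive signed graphs) and the excerpt already gives us $\rk(\G) = |N| - c(\G)$ for an unsigned graph $\G$. First I would observe that $\S^+$ and $\S^-$ are spanning subgraphs of $\S$ on the same node set $N$, so $|N| = q$ for each. Hence $\rk(\S^+) = q - c(\S^+)$ and $\rk(\S^-) = q - c(\S^-)$. Since the number of positive cliques is $|\Alpha(\S)| = c(\S^+)$ and the number of negative cliques is $|\Beta(\S)| = c(\S^-)$ by the definitions recalled just before the lemma, substituting gives
\[
|\Alpha(\S)| + |\Beta(\S)| = c(\S^+) + c(\S^-) = \bigl(q - \rk(\S^+)\bigr) + \bigl(q - \rk(\S^-)\bigr) = 2q - [\rk(\S^+) + \rk(\S^-)],
\]
which is the first claimed identity.

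For the second statement, suppose $\S$ is irredundant. The excerpt states that $\S$ is irredundant if and only if both $\S^+$ and $\S^-$ are forests. For a forest $\G$ we have $\rk(\G) = |N| - c(\G) = |E(\G)|$, i.e.\ the rank equals the number of edges (equivalently, a forest has cyclomatic number zero). Therefore $\rk(\S^+) = |E^+|$ and $\rk(\S^-) = |E^-|$, where $E^+$ and $E^-$ are the positive and negative edge sets; since every edge is either positive or negative, $|E^+| + |E^-| = |E|$. Plugging into the first identity yields $|\Alpha(\S)| + |\Beta(\S)| = 2q - |E|$. To rewrite this in terms of the cyclomatic number, recall $\xi(\S) = |E| - |N| + c(\S) = |E| - q + c(\S)$, so $|E| = \xi(\S) + q - c(\S)$, and hence $2q - |E| = 2q - \xi(\S) - q + c(\S) = q + c(\S) - \xi(\S)$, as desired.

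Finally, for the "in particular" clause: a signed tree is connected and acyclic, so $c(\S) = 1$ and $\xi(\S) = 0$; moreover a tree is in particular a graph whose positive and negative subgraphs are both forests, so it is irredundant and the formula applies, giving $q + 1 - 0 = q + 1$ signed cliques.

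I do not anticipate a genuine obstacle here — the proof is a short bookkeeping exercise once one notices that $\S^+$ and $\S^-$ are ordinary graphs on all $q$ nodes, so their ranks are governed by component counts, and that irredundancy is precisely the forest condition. The only point requiring a little care is to make sure the "spanning" convention is being used — that $\S^+$ and $\S^-$ include all $q$ nodes, with isolated nodes allowed — so that $c(\S^+)$ correctly counts positive cliques including the singleton cliques arising from homogeneous or isolated nodes; this is exactly how $\Alpha(\S)$ and $\Beta(\S)$ were defined in the text preceding the lemma, so the identification $|\Alpha(\S)| = c(\S^+)$ is immediate and no edge cases are lost.
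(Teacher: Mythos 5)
Your proof is correct and follows essentially the same route as the paper's: the first identity via $\rk(\S^\epsilon) = q - c(\S^\epsilon)$ and the identification of signed cliques with components of $\S^+$ and $\S^-$, and the irredundant case via the forest characterization, with the same bookkeeping through $\xi(\S)$. (The paper also sketches an alternative inductive argument by splitting inhomogeneous nodes, but its primary proof is the one you gave.)
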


\begin{proof}
The first formula follows directly from the general formula for the rank of a graph.  

If $\S$ is irredundant, $\S^+$ is a forest with $|\Alpha(\S)|$ components and $\S^-$ is a forest with $|\Beta(\S)|$ components.  Therefore, $|\Alpha(\S)| + |\Beta(\S)| = 2q - |E| = q - \xi(\S) + c(\S)$.

A more entertaining proof is by induction on the number of inhomogeneous nodes.  Define $g(\S) := |\Alpha(\S)|+|\Beta(\S)| - 2q + |E| = |\Alpha(\S)|+|\Beta(\S)| - q - c(\S) + \xi(\S).$  If all nodes are homogeneous, obviously $g(\S) = 0$.  Otherwise, let $v$ be an inhomogeneous node.  Split $v$ into two nodes, $v^+$ and $v^-$, incident respectively to all the positive or negative edges at $v$.  The new graph has one less inhomogeneous node, two more signed cliques (a positive clique $\{v^-\}$ and a negative clique $\{v^+\}$), one more node, and the same number of edges, hence the same value of $g$ as does $\S$.  Thus, by induction, $g \equiv 0$.
\end{proof}

%%%%%%%%%%%%%%%%%%%%%%%%%%%%

\section{Proof of the Bishops Period}\label{Bperiod}

We are now prepared to prove Theorem~\ref{T:bishopsperiod}.  We already proved in Theorem~\Phvdiag\ that the coefficients $\gamma_i(n)$ are constant (as functions of $n$) for $i<6$ and that $\gamma_6(n)$ has period 2.  Thus it will suffice to prove that the denominator of the inside-out polytope $(\cB,\cA_\pB)$ for $q$ bishops divides 2.  
(In fact, what we prove is the stronger result stated in Lemma~\ref{L:bishopsdenom}.) 
To do this, we find the denominators of all vertices explicitly by analyzing all sets of $2q$ equations that determine a point.  We use the polytope $\cube$ for the boundary inequalities and the move arrangement $\cA_\pB$ for the equations of attack.   

We use a fundamental fact from linear algebra.

\begin{lem}\label{L:vertexgeo}
The coordinates $z_i=(x_i,y_i)$ belong to a vertex of the inside-out polytope if and only if there are $k$ attack equations and $2q-k$ boundary equations that uniquely determine those coordinates.
\end{lem}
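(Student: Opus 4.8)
The plan is to unwind the definition of a vertex of an inside-out polytope from \cite{IOP} and then apply one standard fact of linear algebra: a system of linear equations in $2q$ unknowns has a unique solution exactly when its coefficient matrix has rank $2q$, and in that case one may pass to any $2q$ of the equations whose coefficient vectors are linearly independent without changing the solution. Concretely, the facet hyperplanes of $\cP=\cube$ are the zero sets of the \emph{boundary equations} $x_i=0,\ x_i=1,\ y_i=0,\ y_i=1$ for $1\le i\le q$; the members of $\cA_\pB$ are the zero sets of the \emph{attack equations} $(y_j-y_i)=\pm(x_j-x_i)$; and, by the construction of \cite{IOP}, a point $\bz\in\cP$ is a vertex of $(\cP,\cA_\pB)$ precisely when the intersection of all the hyperplanes of these two kinds that pass through $\bz$ is the single point $\{\bz\}$. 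I will call those hyperplanes the \emph{walls through $\bz$}.

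For the ``if'' direction I would argue: assume $\bz\in\cP$, and that some $k$ attack equations together with some $2q-k$ boundary equations are satisfied at $\bz$ and have $\bz$ as their only common solution. Each of these $2q$ equations is the equation of a wall through $\bz$, so the intersection of \emph{all} walls through $\bz$ is contained in the intersection of this distinguished subfamily, namely $\{\bz\}$; as $\bz$ itself lies in every wall through it, that full intersection equals $\{\bz\}$, and hence $\bz$ is a vertex. For the ``only if'' direction, let $\bz$ be a vertex, so that the common solution set of the system formed by the equations of all walls through $\bz$ equals $\{\bz\}$. This system is consistent (it is satisfied by $\bz$), and since a nonempty solution set of a linear system in $2q$ unknowns has dimension equal to $2q$ minus the rank of the coefficient matrix, a one-point solution set forces that rank to be $2q$. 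Choosing $2q$ of the equations whose coefficient vectors are linearly independent then yields a consistent square system with nonsingular coefficient matrix, whose unique solution must be $\bz$; if $k$ of these $2q$ equations are attack equations, the remaining $2q-k$ are boundary equations, and these uniquely determine $\bz$.

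The argument is short --- it is the specialization to bishops on the square board of Lemma~\Lvertexgeo --- so I do not expect a genuine obstacle; the only point requiring care is that the ``if'' direction tacitly presumes $\bz\in\cP$. A system of $2q$ attack and boundary equations may perfectly well have a unique solution lying outside the cube, and such a point is \emph{not} a vertex of the inside-out polytope even though it arises from exactly such a system. This costs nothing in the intended application (Lemma~\ref{L:bishopsdenom}): there one is free to enumerate all such systems, solve them, and keep only the solutions lying in $\cP$.
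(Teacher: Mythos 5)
Your argument is correct, but note that this paper never proves the lemma at all: it is stated as ``a fundamental fact from linear algebra,'' being the bishops specialization of Lemma~\Lvertexgeo\ of Part~I, so there is no in-paper proof to compare with, and what you have written is a reasonable reconstruction of the intended justification. Two remarks on where your write-up leans on more than pure linear algebra. First, the characterization you attribute to \cite{IOP} --- that $\bz\in\cP$ is a vertex of $(\cP,\cA_\pB)$ exactly when the walls through $\bz$ meet only in $\bz$ --- is not the literal definition there: vertices of an inside-out polytope are defined as vertices of its regions (closures of components of $\cP^\circ\setminus\bigcup\cA_\pB$). The direction ``vertex of a region $\Rightarrow$ walls through it meet only in it'' is immediate (the facets of the region through the vertex lie in walls and their affine hulls already intersect in the point), but the direction your ``if'' step uses --- a point of $\cube$ at which the walls through it have zero-dimensional intersection is a vertex of some region --- deserves a sentence: locally at such a point the relevant walls form a central arrangement, the closure of any component of $\cP^\circ\setminus\bigcup\cA_\pB$ meeting a small ball there is a pointed cone with apex at the point (its lineality space lies in the intersection of the walls through the point, which is trivial), so the point is a vertex of that region. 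Second, your caveat about $\bz\in\cP$ is exactly right and is how the statement must be read; since only the ``only if'' direction (vertex $\Rightarrow$ a determining system of $k$ attack and $2q-k$ boundary equations) is used in Lemma~\ref{L:bishopsdenom}, where one passes to the more general ``lattice vertices'' and discards nothing, the caveat costs nothing in the application, just as you say. The rank argument in your ``only if'' direction (extract $2q$ independent equations from the consistent system of all walls through $\bz$; the square subsystem is nonsingular and its unique solution is $\bz$) is exactly the linear algebra the authors have in mind.
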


We assume the $q$ bishops are labelled $\pB_1,\ldots,\pB_q$.  
A configuration of bishops is described by a point $\bz = (z_1,z_2,\ldots,z_q) \in \bbR^{2q}$,\label{d:bfz} where $z_i = (x_i,y_i)$ is the normalized plane coordinate vector of the $i$th bishop $\pB_i$; that is, $x_i,y_i \in (0,1)$ and the position of $\pB_i$ is $(n+1)z_i$.  The bishops constraints are that $\bz$ should not lie in any of the $q(q-1)$ \emph{bishops hyperplanes},
\begin{align}
\cH_{ij}^+: x_i-y_i = x_j-y_j, \qquad \cH_{ij}^-: x_i+y_i = x_j+y_j,
\label{E:bishopseqns}
\end{align}
where $i\neq j$.  The corresponding equations are the \emph{bishops equations} and a subspace $\cU$ defined by a set of bishops equations is a \emph{bishops subspace}.\label{d:cU} 
The boundary equations of $\cube$ have the form $x_i=0$ or $1$ and $y_i=0$ or $1$.  
We generalize the boundary constraints; we call any equation of the form $x_i=c_i \in \bbZ$ or $y_i=d_i \in \bbZ$ a \emph{fixation}.\label{d:fixnconst}
We call any point of $\bbR^{2q}$ determined by $m$ bishops equations and $2q-m$ fixations a \emph{lattice vertex}.  
(The term ``fixation'' was used in Part~V only for a boundary equation; here it means any equation that fixes one coordinate at an integral value.) 

The first step is to find the dimension of a bishops subspace.  
We do so by means of a signed graph $\S_\pB$ with node set $N := \{v_1,v_2,\ldots,v_q\}$ corresponding to the bishops $\pB_i$ and their plane coordinates $z_i=(x_i,y_i)$ and with edges corresponding to the bishops hyperplanes.  
For a hyperplane $\cH_{ij}^+$ we have a \emph{positive edge} $e_{ij}^+$\label{d:sgdedge} and for a hyperplane $\cH_{ij}^-$ we have a \emph{negative edge} $e_{ij}^-$.  Thus, $\S_\pB$ is a complete signed link graph: it has all possible edges (barring loops, of which we have no need) of both signs.  
For each bishops subspace $\cU$ we have a spanning subgraph $\S(\cU)$\label{d:bishopsgraph}
whose edges correspond to the bishops hyperplanes that contain $\cU$.  (This is nothing other than the slope graph defined in Section~I.3.3, except that it has extra nodes to make up a total of $q$.)  Then $\cU$ is the intersection of all the hyperplanes whose corresponding edges are in $\S(\cU)$.

\begin{lem}\label{L:bishopdim}
For any $\cS \subseteq \cA_\pB$,\label{d:cS} with corresponding signed graph $\S \subseteq \S_\pB$, $\codim\bigcap\cS = \rk(\S^+) + \rk(\S^-)$.  
For a bishops subspace $\cU$, $\dim \cU = |\Alpha(\S(\cU))| + |\Beta(\S(\cU))|$ and $\codim \cU = \rk(\S(\cU)^+) + \rk(\S(\cU)^-)$.  
\end{lem}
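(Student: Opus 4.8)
The plan is to reduce all three equalities to the first one and then to prove that one by a linear change of coordinates that uncouples the positive and negative equations. For the reduction: the equality $\codim\cU = \rk(\S(\cU)^+)+\rk(\S(\cU)^-)$ is just the first equality applied to the set $\cS$ consisting of all bishops hyperplanes that contain $\cU$, since for that $\cS$ one has $\S = \S(\cU)$ and $\bigcap\cS = \cU$. The equality $\dim\cU = |\Alpha(\S(\cU))|+|\Beta(\S(\cU))|$ then follows immediately, because $\dim\cU = 2q - \codim\cU$ and, by the first formula of Lemma~\ref{L:sclique}, $|\Alpha(\S(\cU))|+|\Beta(\S(\cU))| = 2q - [\rk(\S(\cU)^+)+\rk(\S(\cU)^-)]$. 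So it remains to prove that $\codim\bigcap\cS = \rk(\S^+)+\rk(\S^-)$ for every $\cS\subseteq\cA_\pB$, with $\S$ its signed graph.

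To do this I would pass, on each plane factor, to the coordinates $u_i := x_i - y_i$ and $w_i := x_i + y_i$. This is an invertible linear change of coordinates on $\bbR^{2q}$ and hence preserves codimensions, and by \eqref{E:bishopseqns} it turns $\cH_{ij}^+$ into the equation $u_i = u_j$ and $\cH_{ij}^-$ into $w_i = w_j$. Thus the homogeneous linear system cutting out $\bigcap\cS$ splits as the union of a subsystem $\{u_i - u_j = 0 : e_{ij}^+\in\cS\}$ in the variables $u_1,\dots,u_q$ alone and a subsystem $\{w_i - w_j = 0 : e_{ij}^-\in\cS\}$ in the disjoint variables $w_1,\dots,w_q$ alone. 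Since no variable occurs in both parts, the rank of the whole system is the sum of the ranks of the two parts, and $\codim\bigcap\cS$ equals that total rank.

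Finally I would identify each part's rank with a signed-graph rank. The coefficient matrix of $\{u_i - u_j = 0 : e_{ij}^+\in\cS\}$ is, up to transposition and the immaterial choice of orientation implicit in defining an incidence matrix, the matrix $\Eta(\S^+)$ of the all-positive signed graph $\S^+$, so its rank is $\rk(\S^+)$; by the rank formula $\rk(\S^+) = |N| - b(\S^+)$ recalled in Section~\ref{sg} this equals $q - c(\S^+)$, since an all-positive graph has no negative circle, in agreement with the elementary observation that the $u$-equations force all nodes of a positive clique to share a common value of $u$. The same argument applies to the $w$-part, giving rank $\rk(\S^-)$, and summing yields $\codim\bigcap\cS = \rk(\S^+)+\rk(\S^-)$. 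There is really only one idea here, the uncoupling substitution, which works precisely because the bishop's two slopes are $+1$ and $-1$; the only thing requiring a little care afterwards is the uniform treatment of isolated nodes through the component counts $c(\S^\pm)$, and I do not expect any genuine obstacle.
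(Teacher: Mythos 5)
Your proof is correct, but it takes a genuinely different route from the paper's. You diagonalize: the substitution $u_i=x_i-y_i$, $w_i=x_i+y_i$ splits the system defining $\bigcap\cS$ into two uncoupled graphic (braid-type) systems, one on the $u$'s for $\S^+$ and one on the $w$'s for $\S^-$, so the codimension is the sum of two ordinary graph ranks $q-c(\S^+)$ and $q-c(\S^-)$ (which is indeed how the paper uses $\rk(\S^\pm)$, namely as unsigned-graph ranks, consistent with Lemma~\ref{L:sclique}). The paper instead stays in the original coordinates and argues combinatorially: it prunes each signed clique to a spanning tree to get an irredundant $\S'$, uses Lemma~\ref{L:sclique} to count its edges, concludes $\codim\bigcap\cS\le 2q-(|\Alpha(\S)|+|\Beta(\S)|)$ since each hyperplane drops the dimension by at most one, and then forces equality by extending $\S'$ to an irredundant reduction of the full graph $\S_\pB$ and using the fact that $\bigcap\cA_\pB$ is the $2$-dimensional diagonal, so all $2q-2$ hyperplanes in the reduction must each drop the dimension by exactly one. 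Your derivation of the $\cU$ statements from the first equality (via $\cS=\{H\in\cA_\pB: H\supseteq\cU\}$, for which $\bigcap\cS=\cU$ and $\S=\S(\cU)$, plus Lemma~\ref{L:sclique}) is exactly the paper's closing step. What your approach buys is directness and transparency: the block-diagonal structure immediately exhibits the arrangement as linearly equivalent to two independent graphic arrangements, so Proposition~\ref{P:bishopmatroid} ($G(K_q)\oplus G(K_q)$) falls out for free, and you avoid the slightly delicate step in the paper ("one can choose the irredundant reduction of $\S_\pB$ to contain $\S'$"). What the paper's approach buys is that it never leaves the $(x,y)$ coordinates and the signed-clique bookkeeping that the later proof of Lemma~\ref{L:bishopsdenom} relies on, and its upper-bound-plus-saturation argument is the template for handling riders whose slopes do not admit such a clean uncoupling.
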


\begin{proof}
We begin with $\cS$ by looking at a single sign.  Adjacent edges $e_{ij}^\epsilon, e_{jk}^\epsilon$ of sign $\epsilon$\label{d:edgesign} in $\S$, corresponding to $\cH_{ij}^\epsilon$ and $\cH_{jk}^\epsilon$, imply the third positive edge because the hyperplanes' equations imply that of $\cH_{ik}^\epsilon$.  Consequently we may replace $E(\S^\epsilon)$ by a spanning tree of each $\epsilon$-signed clique without changing the intersection subspace.  Call the revised graph $\S'$.  
Being irredundant, it has $2q - (|\Alpha(\S)|+|\Beta(\S)|)$ edges by Lemma~\ref{L:sclique}.  As each hyperplane reduces the dimension of the intersection by at most 1, we conclude that $\codim\bigcap\cS \leq 2q - (|\Alpha(\S)|+|\Beta(\S)|)$.

On the other hand it is clear that $\cA_\pB$ intersects in the subspace $\{(z,z,\ldots,z): z \in \bbR^2\}$; thus, $2q-2 = \codim\bigcap\cA_\pB$.  The corresponding signed graph $\S_\pB$, when reduced to irredundancy, consists of a spanning tree of each sign; in other words, it has $2q-2$ edges.  One can choose the irredundant reduction of $\S_\pB$ to contain $\S'$; it follows that every hyperplane of $\cS$ must reduce the dimension of the intersection by exactly 1 in order for the reduced $\S_\pB$ to correspond to a 2-dimensional subspace of $\bbR^2$.  Therefore, $\codim\bigcap\cS = |E(\S')| = 2q - (|\Alpha(\S)| + |\Beta(\S)|) = \rk(\S^+) + \rk(\S^-)$.

The dimension formula for $\cU$ follows by taking $\cS := \{H \in \cA_\pB : H \supseteq \cU\}$.  
\end{proof}

Defining the rank of an arrangement $\cA$ of hyperplanes to be the codimension of its intersection yields a matroid whose ground set is $\cA$.  The matroid's rank function encodes the linear dependence structure of the bishops arrangement $\cA_\pB$.  The complete graph of order $q$ is $K_q$.\label{d:K}

\begin{prop}\label{P:bishopmatroid}
The matroid of the hyperplane arrangement $\cA_\pB$ is isomorphic to $G(K_q) \oplus G(K_q)$.  
\end{prop}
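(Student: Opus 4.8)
The plan is to produce an explicit isomorphism between the ground set $\cA_\pB$ and the edge set $E(K_q) \sqcup E(K_q)$ and then check that it carries the rank function of $\cA_\pB$ to the rank function of $G(K_q)\oplus G(K_q)$. The bijection is the obvious one coming from the signed graph $\S_\pB$: the hyperplane $\cH_{ij}^+$ corresponds to the edge $ij$ in the first copy of $K_q$ (via the positive edges $e_{ij}^+$) and $\cH_{ij}^-$ to the edge $ij$ in the second copy (via the negative edges $e_{ij}^-$). Since $\S_\pB$ is the complete signed link graph, $\S_\pB^+$ and $\S_\pB^-$ are each a copy of $K_q$, so this is a genuine bijection between $\cA_\pB$ and $E(K_q)\sqcup E(K_q)$.

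For the rank functions: given $\cS \subseteq \cA_\pB$ with corresponding signed subgraph $\S \subseteq \S_\pB$, Lemma~\ref{L:bishopdim} gives $\rk_{\cA_\pB}(\cS) = \codim\bigcap\cS = \rk(\S^+) + \rk(\S^-)$. On the other side, the rank in $G(K_q)\oplus G(K_q)$ of the corresponding edge set is, by the definition of matroid direct sum, the sum of the rank of its first component (a subgraph of the first $K_q$, namely $\S^+$) in the cycle matroid $G(K_q)$ and the rank of its second component (the subgraph $\S^-$) in the cycle matroid $G(K_q)$. But the rank of a graph in its cycle matroid is exactly the graph rank $|N| - c(\cdot)$, which by the conventions recalled in Section~\ref{sg} is precisely $\rk(+\G)$ for that subgraph. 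Hence the two rank values agree on every subset, and the bijection is a matroid isomorphism.

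The only subtlety to address carefully is that $\rk(\S^+)$ as defined in Section~\ref{sg} (the rank of the $\pm1$ incidence matrix of the all-positive signed graph $\S^+$) coincides with the cycle-matroid rank of $\S^+$ viewed as an ordinary graph; but this is exactly the statement recorded in Section~\ref{sg}, that an unsigned graph acts as an all-positive signed graph with $\rk(+\G) = |N| - c(\G)$ and $G(+\G)$ equal to the cycle matroid. So no real obstacle arises here — the proposition is essentially a bookkeeping consequence of Lemma~\ref{L:bishopdim} together with the definition of the direct sum of matroids. I would write the proof as: exhibit the bijection, invoke Lemma~\ref{L:bishopdim} for the left-hand rank, invoke the definition of $\oplus$ and the identification of graph rank with cycle-matroid rank for the right-hand rank, and conclude equality.
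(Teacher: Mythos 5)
Your proposal is correct and is essentially the paper's own argument: the paper likewise invokes Lemma~\ref{L:bishopdim} to identify the rank of any $\cS\subseteq\cA_\pB$ with $\rk(\S^+)+\rk(\S^-)$ and then observes that this rank function is exactly that of $G(\S_\pB^+)\oplus G(\S_\pB^-)$ with both summands complete graphs $K_q$. Your version just spells out the bijection and the direct-sum bookkeeping a bit more explicitly, which is fine.
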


\begin{proof}
The rank of $\cS \subseteq \cA_\pB$, corresponding to $\S \subseteq \S_\pB$, is the codimension of $\bigcap \cS$, which by Lemma~\ref{L:bishopdim} equals $\rk(\S^+) + \rk(\S^-)$.  The matroid this implies on $E(\S_\pB)$ is the direct sum of $G(\S_\pB^+)$ and $G(\S_\pB^-)$.  Both $\S_\pB^+$ and $\S_\pB^-$ are unsigned complete graphs.  The proposition follows.
\end{proof}

Now we return to the analysis of a lattice vertex $\bz$.  
A point is \emph{strictly half integral} if its coordinates have least common denominator 2; it is \emph{weakly half integral} if its coordinates have least common denominator 1 or 2.  A \emph{weak half integer} is an element of $\frac12\bbZ$; a \emph{strict half integer} is a fraction that, in lowest terms, has denominator 2.  

\begin{lem}\label{L:bishopsdenom}
A point $\bz = (z_1,z_2,\ldots,z_q) \in \bbR^{2q}$, determined by a total of $2q$ bishops equations and fixations, is weakly half integral.  Furthermore, in each $z_i$, either both coordinates are integers or both are strict half integers.
\end{lem}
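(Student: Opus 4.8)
The plan is to pass to the \emph{bishop coordinates} $r_i := x_i+y_i$ and $s_i := x_i-y_i$ of each bishop $\pB_i$. In these coordinates a positive bishops equation $\cH_{ij}^+$ reads $s_i = s_j$ and a negative one $\cH_{ij}^-$ reads $r_i = r_j$, while a fixation reads $r_i+s_i = 2c_i$ (if it fixes $x_i = c_i$) or $r_i-s_i = 2d_i$ (if it fixes $y_i = d_i$); note that in either case the right-hand side is \emph{even}. Let $\S$ be the signed graph whose edges are the bishops equations used to determine $\bz$, and let $\cU$ be the intersection of the corresponding hyperplanes. The hypothesis that $\bz$ is uniquely determined forces $\S$ to be irredundant: otherwise the bishops equations would be linearly dependent, and by Lemma~\ref{L:bishopdim} could not be completed by fixations to a system of rank $2q$. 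Consequently the number of fixations used is $2q-|E(\S)| = |\Alpha(\S)|+|\Beta(\S)| = \dim\cU$. On $\cU$ the value $s_i$ is constant on each positive clique and $r_i$ is constant on each negative clique; denoting these common values by $s_A$ for $A \in \Alpha(\S)$ and $r_B$ for $B \in \Beta(\S)$, the family $\{r_B\} \cup \{s_A\}$ is a system of $\dim\cU$ affine coordinates on $\cU$.

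First I would reduce the lemma to the single claim that \emph{every $r_B$ and every $s_A$ is an integer}. Indeed, if $A$ and $B$ are the positive and negative cliques of $\pB_i$, then $x_i = \tfrac12(r_B+s_A)$ and $y_i = \tfrac12(r_B-s_A)$, and the numbers $r_B+s_A$ and $r_B-s_A$ differ by the even number $2s_A$. So once $r_B, s_A \in \bbZ$, both coordinates of $z_i$ are integers when $r_B-s_A$ is even and both are strict half integers when $r_B-s_A$ is odd; in either case $\bz$ is weakly half integral. These are exactly the two assertions of the lemma.

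To prove the integrality claim, I restrict the fixations used to $\cU$. This gives a \emph{square} linear system of $|\Alpha(\S)|+|\Beta(\S)|$ equations in the variables $\{r_B\} \cup \{s_A\}$, each of the form $r_B+s_A = 2c_i$ or $r_B-s_A = 2d_i$, hence each with exactly two nonzero coefficients, both $\pm1$, and an even constant term. I encode this system by a signed graph $D$ on node set $\Alpha(\S) \cup \Beta(\S)$ whose edge for the fixation at $\pB_i$ joins the positive clique of $\pB_i$ to its negative clique, and is a negative edge if the fixation is of $x_i$ and a positive edge if of $y_i$; then the coefficient matrix of the restricted system is the transpose of an incidence matrix of $D$. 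Since $\bz$ is uniquely determined this matrix is nonsingular, so $\Eta(D)$ has full row and full column rank; by the fact recalled in Section~\ref{sg} (from \cite[Theorem~8B.1]{SG}), $D$ is therefore a $1$-forest in which every circle is negative, and it spans $\Alpha(\S) \cup \Beta(\S)$ (an isolated node would leave a variable unconstrained).

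Finally I would solve $D$ component by component. Fix a component $D_\ell$ with its unique circle $Z_\ell$. Each edge equation along $Z_\ell$ expresses one circle variable in terms of the next as $\pm$(that variable) plus an even integer, the sign being $-1$ precisely at negative edges; composing these equations once around $Z_\ell$ gives $w = \varepsilon w + (\text{even integer})$ for a circle variable $w$, where $\varepsilon \in \{\pm1\}$ is the product of the edge signs of $Z_\ell$, that is, the sign of the circle. Since $Z_\ell$ is negative, $\varepsilon = -1$, so $2w$ is even and $w \in \bbZ$; then the edge equations around $Z_\ell$, followed by the tree edges of $D_\ell$ processed outward from $Z_\ell$, successively force each remaining variable to equal $\pm$(a known integer) plus an even integer, hence to be an integer. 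Doing this for all components proves the claim. The step I expect to be the main obstacle is setting up $D$ correctly: choosing the edge signs so that the nonsingular coefficient matrix is genuinely a signed-graph incidence matrix, and observing that the bishop coordinates give every fixation an even constant term. After that, the shape of $D$ is immediate from Section~\ref{sg}, and the integrality is a routine walk around a negative circle, the negative sign supplying exactly the factor of $2$ that clears the $\tfrac12$'s in $x_i = \tfrac12(r_B+s_A)$ and $y_i = \tfrac12(r_B-s_A)$.
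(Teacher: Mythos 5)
Your proposal is correct, and its overall architecture matches the paper's: your clique values $r_B,s_A$ play the role of the paper's $a_k,b_l$, your restriction of the fixations to $\cU$ is the paper's square system \eqref{E:fixation-equations}, and your graph $D$ is the paper's $\Ps_\bz$ (your $x/y$ sign convention is the global flip of the paper's, which is harmless because the clique graph is bipartite, so all circles have even length and their signs are unchanged); in both arguments nonsingularity plus the rank facts recalled from \cite[Theorem~8B.1]{SG} force a spanning $1$-forest all of whose circles are negative. The one genuine divergence is the final integrality step: the paper invokes Lemma~\ref{L:halfintegral} (the Hochbaum--Megiddo--Naor--Tamir half-integrality theorem, via \cite{AKbi}) and then uses the evenness of the constants $2c_r,2d_s$ to conclude that the clique values are integers, whereas you prove that integrality directly by composing the edge equations once around the unique negative circle of each component of $D$ (getting $w=-w+{}$an even integer, hence $w\in\bbZ$) and propagating outward along tree edges. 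Your route is self-contained and makes explicit where the factor of $2$ is absorbed --- precisely at the negative circles --- while the paper's route is shorter on the page and rests the step on a general theorem about signed-graph (bidirected) incidence matrices, which is more convenient if one wants to reuse the argument beyond the bishop. Your preliminary reductions (irredundancy of $\S$ from unique determination, the fixation count $\dim\cU=|\Alpha(\S)|+|\Beta(\S)|$, nonsingularity of the restricted system, and the parity argument giving ``both integers or both strict half integers'') all check out against Lemmas~\ref{L:sclique} and~\ref{L:bishopdim}.
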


Consequently, a vertex of the bishops' inside-out polytope $(\cube,\cA_\pB)$ has each $z_i \in \{0,1\}^2$ or $z_i = (\frac12,\frac12)$.

\begin{proof}
For the lattice vertex $\bz$, find a bishops subspace $\cU$ such that $\bz$ is determined by membership in $\cU$ together with $\dim \cU$ fixations.  

Suppose $v_i,v_j \in A_k$,\label{d:ABkl} a positive clique in $\S(\cU)$; then $x_i - y_i = x_j - y_j$; thus, the value of $x_i - y_i$ is a constant $a_k$ on $A_k$.  Similarly, $x_i + y_i$ is a constant $b_l$ on each negative clique $B_l$.

Now replace $\S(\cU)$ by an irredundant subgraph $\S$ with the same positive and negative cliques.  The edges of $\S$ within each clique are a tree.  The total number of edges is $2q-(|\Alpha(\S(\cU))|+|\Beta(\S(\cU))|)$; this is the number of bishops equations in the set determining $\bz$.  The remaining $|\Alpha(\S(\cU))|+|\Beta(\S(\cU))|$ equations are fixations.

Write $C_\cU$\label{d:cliq} for the clique graph $C(\S) = C(\S(\cU))$.  Let $\mp C_\cU$ be the graph $C_\cU$ with each edge $v_i$ replaced by two edges called $v_i^x$ and $v_i^y$.  If we (arbitrarily) regard $x$ as $-$ and $y$ as $+$, this is a signed graph.

We defined $a_k$ and $b_l$ in terms of the $x_i$ and $y_i$.  We now reverse the viewpoint, treating the $a$'s and $b$'s as independent variables and the $x$'s and $y$'s as dependent variables.  This is possible because, if $A_k, B_l$ are the endpoints of $v_i$ in $C_\cU$, then 
$x_i - y_i = a_k \text{ and } x_i + y_i = b_l,$
so
$$
x_i = \frac12(a_k+b_l) \quad \text{ and } \quad y_i = \frac12(-a_k+b_l);
$$
in matrix form,
\begin{equation}\label{E:zfromab}
\begin{bmatrix} \bx \\ \by \end{bmatrix} 
= \frac12 \begin{bmatrix} \Eta(-C_\cU)\transpose \\ \Eta(+C_\cU)\transpose \end{bmatrix} \begin{bmatrix} \ba \\ \bfb \end{bmatrix} 
= \frac12 \Eta(\mp C_\cU)\transpose \begin{bmatrix} \ba \\ \bfb \end{bmatrix} ,
\end{equation}
where $\bx = \begin{bmatrix} x_i \end{bmatrix}_{i=1}^q$,\label{bf x} $\by = \begin{bmatrix} y_j \end{bmatrix}_{j=1}^q$, $\ba = \begin{bmatrix} a_k \end{bmatrix}_{k=1}^{|\Alpha(\S(\cU))|}$,\label{bf a} and $\bfb = \begin{bmatrix} b_l \end{bmatrix}_{l=1}^{|\Beta(\S(\cU))|}$ are column vectors and $\Eta(\epsilon C_\cU)$ is the incidence matrix of $C_\cU$ with, respectively, all edges positive for $\epsilon=+$ (with $-$ and $+$ at the $\Alpha$ and $\Beta $ ends) and all edges negative for $\epsilon=-$ (with $+$ at both ends).  Thus, the first coefficient matrix is the transposed incidence matrix of $\mp C_\cU$ written with a particular ordering of the edges.  

Fixing a total of $|\Alpha(\S(\cU))|+|\Beta(\S(\cU))|$ variables $x_{i_1},\ldots$ and $y_{j_1},\ldots$ should determine all the values $x_1, y_1, \ldots, x_q, y_q$.  The fixations of $\bz$ correspond to edges in $\mp C_\cU$ so we may treat a choice of fixations as a choice of edges of $\mp C_\cU$, where fixing $x_i$ or $y_i$ corresponds to choosing the edge $v_i^x$ or $v_i^y$.  We need to know what kind of edge set the fixations correspond to.  
Let $\Ps_\bz$\label{d:Psiz} denote the spanning subgraph of $\mp C_\cU$ whose edges are the chosen edges.  The fixation equations can be written in matrix form as 
\begin{equation}\label{E:fixation-equations}
M\transpose \begin{bmatrix} \ba \\ \bfb \end{bmatrix}
=
2 \begin{bmatrix} x_{i_1} \\ \vdots \\ y_{j_1} \\ \vdots \end{bmatrix} 
=
2 \begin{bmatrix} \bc \\ \bd \end{bmatrix} ,
\end{equation}
where the fixation edges are $v_{i_1}^x, \ldots$ with endpoints $A_{k_1},B_{l_1},\ldots$ and $v_{j_1}^y, \ldots$ with endpoints $A_{k'_1},B_{l'_1},\ldots$; the fixations are $x_{i_r}=c_r$ and $y_{j_s}=d_s$; $\bc = \begin{bmatrix} c_r \end{bmatrix}_{r=1}^{\bar r}$\label{bf c} and $\bd = \begin{bmatrix} d_s \end{bmatrix}_{s=1}^{\bar s}$ are column vectors (with $\bar r+\bar s=|\Alpha(\S(\cU))|+|\Beta(\S(\cU))|$, the total number of fixations); and $M$ is an $(|\Alpha(\S(\cU))|+|\Beta(\S(\cU))|) \times (|\Alpha(\S(\cU))|+|\Beta(\S(\cU))|)$ matrix representing the relationships between the $a$'s and $b$'s and the fixed variables:
$$
M := \begin{matrix}
\begin{array}{cccc}
x_{i_1} & \quad & y_{j_1} & \quad \\
\end{array}
&
\\[8pt]
\left[ 
\begin{array}{cccc}
1 & \cdots &  0 & \cdots  \\
\vdots & \ddots & \vdots & \ddots \\
0 & \cdots &  -1 & \cdots \\
\vdots & \ddots & \vdots & \ddots \\[2pt]
%\\[-5pt] 
\hline \\[-8pt]
1 & \cdots & 0 & \cdots \\
\vdots & \ddots & \vdots & \ddots \\
0 & \cdots & 1 & \cdots \\
\vdots & \ddots & \vdots & \ddots \\
\end{array}
\right]  
&
\begin{matrix}%{c}
\\\\ \Alpha(\S(\cU)) \\\\ \\ \\\\ \Beta(\S(\cU)) \\\\ \\
\end{matrix} 
\end{matrix} \ .
$$
The rows of $M$ are indexed by the signed cliques and the columns are indexed by the fixations.  The column of a fixation involving a node $v_i$, whose endpoints in $C_\cU$ are $A_k$ and $B_l$, has exactly two nonzero entries, one in row $A_k$ and one in row $B_l$, whose values are, respectively, $1,1$ for an $x$-fixation and $-1,1$ for a $y$-fixation.  Thus, each column has exactly two nonzero elements, each of which is $\pm1$.  

Consequently, $M$ is the incidence matrix of a signed graph, in fact, $M = \Eta(\Ps_\bz)$.\label{d:M}  $M$ must be nonsingular since the fixed $x$'s and $y$'s uniquely determine the $a$'s and $b$'s (because they determine $\bz$).  It follows (see Section~\ref{sg}) that the fixation equations for $\bz$ are a set corresponding to a spanning 1-forest in $\mp C_\cU$ in which every circle is negative.  This 1-forest is $\Ps_\bz$.  There is choice in the selection of $\Ps_\bz$ but it is not completely arbitrary.\label{d:J}  Let $J_\bz$ be the set of nodes $v_i$ such that $z_i$ is integral; consider $J_\bz$ as a subset of $E(C_\cU)$.  As fixations must be integral, $E(\Ps_\bz)$ must be a subset of $\pm J_\bz$.  As fixations are arbitrary integers, $\Ps_\bz$ may be any spanning 1-forest of $\mp C_\cU$ that is contained in $\pm J_\bz$ and whose circles are negative.  Thus we have found the graphical form of the equations of a lattice vertex.  

\begin{exam}\label{X:z}
For an example, suppose there are three positive and four negative cliques, so $\Alpha(\S(\cU)) = \{A_1,A_2,A_3\}$ and $\Beta(\S(\cU)) = \{B_1,B_2,B_3,B_4\}$, and eight nodes, $N = \{v_1,\ldots,v_8\}$, with the following clique graph $C_\cU$:
$$
\xymatrix{
A_1 \ \bullet \ar@{-}[rr]^{v_1} \ar@{-}[rrd]^>{v_2} 
&& \bullet \ B_1 \\
A_2 \ \bullet \ar@{-}[rru]^<{v_3} \ar@{-}[rr]_{v_4} \ar@{-}[rrd]_{v_5} 
&& \bullet \ B_2 \\
A_3 \ \bullet \ar@{-}[rr]_{v_6} \ar@{-}[rrd]_{v_7} 
&& \bullet \ B_3 \\
&& \bullet \ B_4 \\
}
$$
An example of a suitable 1-forest $\Ps_\bz \subseteq \mp C_\cU$ is
$$
\xymatrix{
A_1 \ \bullet \ar@{--}[rr]^{v_1^x} \ar@{-}[rrd]^>{v_2^y} 
&& \bullet \ B_1 \\
A_2 \ \bullet \ar@{--}[rru]^<{v_3^x} \ar@{--}[rr]_{v_4^x} \ar@{-}[rrd]_{v_5^y} 
&& \bullet \ B_2 \\
A_3 \ \bullet \ar@<.4ex>@{--}[rrd]^{v_7^x} \ar@<-.2ex>@{-}[rrd]_{v_7^y} 
&& \bullet \ B_3 \\
&& \bullet \ B_4 \\
}
$$
It corresponds to fixations 
$$x_1=c_1,\ y_2=d_1,\ x_3=c_2,\ x_4=c_3,\ y_5=d_2,\ x_7=c_4,\ y_7=d_3.$$  
The incidence matrix is
$$
M := \Eta(\Ps_\bz) = \begin{array}{cc}
\begin{array}{ccccccc}
\ x_1 & x_3 &\ x_4 &\ x_7 &\ y_2 &\ y_5 &\ y_7 \\
\end{array}
&
\\[8pt]
\left[ 
\begin{array}{ccccccc}
\ 1\ &\ 0\ &\ 0\ &\ 0\ & -1 & 0 & 0  \\
0 & 1 & 1 & 0 & 0 & -1 & 0 \\
0 & 0 & 0 & 1 & 0 & 0 & -1 \\[2pt]
\hline \\[-8pt]
1 & 1 & 0 & 0 & 0 & 0 & 0 \\
0 & 0 & 1 & 0 & 1 & 0 & 0 \\
0 & 0 & 0 & 0 & 0 & 1 & 0 \\
0 & 0 & 0 & 1 & 0 & 0 & 1 \\
\end{array}
\right]  
&
\begin{matrix}%{c}
A_1 \\ A_2 \\ A_3 \\[7pt] B_1 \\ B_2 \\ B_3 \\ B_4 \\
\end{matrix} 
\end{array}.
$$
Every column has two nonzeros.  The equations of the fixations in matrix form are
$$
M\transpose
\begin{bmatrix}
a_1 \\ a_2 \\ a_3 \\ b_1 \\ b_2 \\ b_3 \\ b_4 \\
\end{bmatrix}
=
2 \begin{bmatrix}
x_1 \\ x_3 \\ x_4 \\ x_7 \\ y_2 \\ y_ 5 \\ y_7 \\
\end{bmatrix}
=
2 \begin{bmatrix}
c_1 \\ c_2 \\ c_3 \\ c_4 \\ d_1 \\ d_2 \\ d_3 \\
\end{bmatrix} ,
$$
where the $c_i$'s and $d_j$'s are any integers we wish in the lemma (but in the application to Theorem~\ref{T:bishopsperiod} they will be 0's and 1's).  The solution is 
\begin{gather*}%\label{E:}
a_1=x_1-x_3+x_4-y_2=c_1-c_2+c_3-d_1, \\
a_2=-x_1+x_3+x_4-y_2=-c_1+c_2+c_3-d_1, \\
a_3=x_7-y_7=c_4-d_3, \\
b_1=x_1+x_3-x_4+y_2=c_1+c_2-c_3+d_1, \\
b_2=x_1-x_3+x_4+y_2=c_1-c_2+c_3+d_1, \\
b_3=-x_1+x_3+x_4-y_2+2y_5=-c_1+c_2+c_3-d_1+2d_2, \\
b_4=x_7+y_7=c_4+d_3, 
\end{gather*}
and the unfixed variables are
\begin{gather*}%\label{E:}
x_2=\frac{a_1+b_2}{2} = c_1-c_2+c_3, \\
x_5=\frac{a_2+b_3}{2} = -c_1+c_2+c_3-d_1+d_2, \\
x_6=\frac{a_3+b_3}{2} = \frac{-c_1+c_2+c_3+c_4-d_1+2d_2-d_3}{2}, \\
y_1=\frac{-a_1+b_1}{2} = c_2-c_3+d_1, \\
y_3=\frac{-a_2+b_1}{2} = c_1-c_3+d_1, \\
y_4=\frac{-a_2+b_2}{2} = c_1-c_2+d_1, \\
y_6=\frac{-a_3+b_3}{2} =\frac{-c_1+c_2+c_3-c_4-d_1+2d_2+d_3}{2}.
\end{gather*}

Observe that $x_6$ and $y_6$ are the only possibly fractional coordinates and their difference, $x_6-y_6 = a_3=c_4-d_3$, is integral; therefore, either $z_6$ is integral, or both $x_6$ and $y_6$ are half integers and $z_6=(\frac12,\frac12)$ if $\bz\in\cube$.
\end{exam}

We are now prepared to prove Lemma~\ref{L:bishopsdenom}.  We need a result from (e.g.) \cite{HMNT}, which can be stated:

\begin{lem}\label{L:halfintegral}
The solution of a linear system with integral constant terms, whose coefficient matrix is the transpose of a nonsingular signed-graph incidence matrix, is weakly half-integral.
\end{lem}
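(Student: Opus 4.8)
The plan is to combine the structure theorem for nonsingular signed-graph incidence matrices, recalled in Section~\ref{sg}, with a short induction that peels the graph down to a single negative circle. Write $M=\Eta(\S)$ for the matrix in question, so the system to be solved is $M\transpose\bx=\mathbf k$ with $\mathbf k$ integral, and what must be shown is that $2\bx$ is integral (which is exactly weak half-integrality). By the fact recorded in Section~\ref{sg} (from \cite[Theorem~8B.1]{SG}), nonsingularity of $\Eta(\S)$ forces $\S$ to be a $1$-forest in which every circle is negative. Since $\Eta(\S)$ is block diagonal over the components of $\S$, the system decouples, so it is enough to treat the case that $\S$ is connected with a single circle $C$, which is negative. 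I will in fact prove the slightly sharper assertion that the solution is integral whenever $\mathbf k$ is even, since that is the form actually needed when this lemma is fed into the proof of Lemma~\ref{L:bishopsdenom}.

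The induction is on $|N(\S)|$. Suppose first that $\S$ has a node $u$ of degree $1$, incident only to the edge $e=uv$. Circles have minimum degree $2$, so $u\notin C$, and $\S'$, obtained by deleting $u$ and $e$, is again a connected $1$-forest whose unique circle $C$ is negative; its incidence matrix is $M$ with the row of $u$ and the column of $e$ struck out, hence still nonsingular. Every equation except the one indexed by $e$ has coefficient $0$ on the variable $x_u$ (since $u$ meets only $e$), so those equations are precisely the system $\Eta(\S')\transpose\bx'=\mathbf k'$ in the variables $x_w$ with $w\neq u$, with $\mathbf k'$ integral; by induction $2\bx'$ is integral (and $\bx'$ is integral if $\mathbf k$ is even). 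The equation indexed by $e$, namely $\eta(u,e)x_u+\eta(v,e)x_v=k_e$, then gives $x_u=\eta(u,e)k_e+\sigma(e)x_v$ after using $\eta(u,e)\eta(v,e)=-\sigma(e)$ and $\eta(u,e)^2=1$, which makes $2x_u$ integral, and $x_u$ integral when $\mathbf k$ is even.

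Otherwise $\S$ has no node of degree $1$; since a connected $1$-forest has $|E(\S)|=|N(\S)|$ and the degrees sum to $2|E(\S)|$, every node has degree exactly $2$, so $\S$ is a single negative circle $C$, with nodes $v_1,\dots,v_m$ in cyclic order and $e_i$ joining $v_i$ to $v_{i+1}$ (indices mod $m$). Solving the equations in succession, equation $e_i$ gives $x_{v_{i+1}}=\sigma(e_i)x_{v_i}+\eta(v_{i+1},e_i)k_{e_i}$, so one trip around the circle yields $x_{v_1}=\bigl(\prod_{i=1}^{m}\sigma(e_i)\bigr)x_{v_1}+K$ with $K$ an integer combination of the $k_{e_i}$. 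As $C$ is negative, $\prod_i\sigma(e_i)=-1$, whence $2x_{v_1}=K$; thus $2x_{v_1}$ is integral, and $x_{v_1}$ is integral when $\mathbf k$ is even (then $K$ is even). The remaining coordinates follow by the integer recursions above, closing the induction; applying the conclusion to the original, possibly disconnected, $\S$ shows the solution of $M\transpose\bx=\mathbf k$ is weakly half-integral.

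The one place that needs care is the sign bookkeeping in the circle case: one must verify that it is precisely the negativity of $C$, i.e.\ $\prod_i\sigma(e_i)=-1$, that converts the tautology $x_{v_1}=\pm x_{v_1}+K$ into a genuine determination of $2x_{v_1}$, and likewise that deleting a pendant node preserves all the hypotheses (connectedness, exactly one circle, and its negativity). Both checks are routine. If one prefers to bypass the induction, the statement can be quoted directly from \cite{HMNT}, or deduced from the Smith normal form of a nonsingular signed-graph incidence matrix, which is $\operatorname{diag}(1,\dots,1,2,\dots,2)$ with exactly one entry equal to $2$ per connected component; the half-integrality of $M^{-\transpose}\mathbf k$ is then immediate.
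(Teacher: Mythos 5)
Your proof is correct, but it takes a genuinely different route from the paper: the paper does not prove the lemma at all, its ``proof'' being a pointer to the literature (the result is contained in \cite{HMNT}, with the translation into signed-graph language explained in \cite[p.~197]{AKbi}), whereas you give a self-contained elementary argument. Your reduction is sound at every step: nonsingularity forces $\S$ to be a $1$-forest with all circles negative (exactly the fact recalled in Section~\ref{sg} from \cite[Theorem~8B.1]{SG}), the system decouples over components, pendant nodes can be stripped (note that nonsingularity of the reduced matrix is best justified by re-invoking the $1$-forest/negative-circle characterization rather than by ``striking a row and column,'' which in general does not preserve nonsingularity --- though in fact the induction only needs that the reduced negative $1$-tree determines $\bx'$), and the base case of a single negative circle gives $x_{v_1}=-x_{v_1}+K$, i.e.\ $2x_{v_1}=K$, precisely because the sign product is $-1$. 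Your sharper formulation (integral solution when the right-hand side is even) is equivalent to the stated lemma by scaling, and it is indeed the form used when the lemma is applied to Equation~\eqref{E:fixation-equations}, where the constants are $2\bc,2\bd$; the paper gets the same conclusion by applying weak half-integrality to the halved system. What your approach buys is independence from \cite{HMNT}/\cite{AKbi} and an explicit view of where the denominator $2$ comes from (one negative circle per component, matching your Smith-normal-form remark); what the paper's citation buys is brevity and placement of the fact within the known theory of integer programs with two variables per inequality. Either is acceptable; if you keep yours, state explicitly that the unique solution exists by nonsingularity and that your recursion constructs it.
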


\begin{proof}
The way in which this is contained in \cite{HMNT} is explained in \cite[p.\ 197]{AKbi}.  
\end{proof}

Since $M$ is the incidence matrix of a signed graph, and since the constant terms in Equation~\eqref{E:fixation-equations},  being twice the fixed values, are even integers, the $a$'s and $b$'s are integers by Lemma~\ref{L:halfintegral}.  The remaining $x$'s and $y$'s are halves of sums or differences of $a$'s and $b$'s, so they are weak half-integers.  The exact formula is obtained by substituting Equation~\eqref{E:fixation-equations} into Equation~\eqref{E:zfromab}:
\begin{equation}\label{E:determination}
\begin{bmatrix} \bx \\ \by \end{bmatrix} 
= \Eta(\mp C_\cU)\transpose (M\inv)\transpose \begin{bmatrix} \bc \\ \bd \end{bmatrix}.
\qedhere
\end{equation}
\end{proof}

Theorem~\ref{T:bishopsperiod} is an immediate corollary of Lemma~\ref{L:bishopsdenom}.

%%%%%%%%%%%%%%%%%%%%%%%%%%%%
\section{Open Questions}\label{questions}

\subsection{Coefficient periods}\label{allcoeffp}\

We proved that $\gamma_6(n)$ is the first coefficient that depends on $n$, having period 2.  We guess that every coefficient after $\gamma_6(n)$ also has period 2.

\subsection{Subspace structure}\label{allsubspaces}\

We have not been able to find a complete formula for all $q$.  By our method, that would need a general structural analysis of all subspaces, which is too complicated for now.  We propose the following problem:  Give a complete description of all subspaces, for all $q$, in terms of signed graphs.  That is, we ask for the slope matroid (see Section~I.7.3).  The signed-graphic frame matroid $G(\S)$ (\cite[Theorem 5.1]{SG}, corrected and generalized in \cite[Theorem 2.1]{BG2}), while simpler than the slope matroid, perhaps could help find a description of the latter.

\subsection{Similar two-move riders}\label{similar}\

The slope matroid for the bishop is simple compared to those for other riders.  We wonder if riders with two slopes that are related by negation (that is, the basic moves are symmetrical under reflection in an axis), or negation and inversion (that is, the basic moves are perpendicular), may be amenable to an analysis that uses the bishops analysis as a guide.

\subsection{Other two-move riders}\label{other}\

We expect that finding formulas for any rider with only two basic moves is intrinsically easier than for riders with more than two and can be done for all such riders in a comprehensive though complicated manner.

%%%%%%%%%%%%%%%%%%%%%%%%%%%%%%%%%%%%%%%%%%%%%%%%%%%%%%%%%
\section*{Dictionary of Notation}

%\begin{minipage}[t]{3.5in} \vspace{0in}
%Latin letters
\begin{enumerate}[]
\item $b(\S)$ -- \# of signed-graph components with no negative circle (p.\ \pageref{d:bSigma})	% B period
\item $c(\G)$, $c(\S)$ -- \# of components of a graph (p.\ \pageref{d:comp})	% B period
\item $c(\S^\pm)$ -- \# of positive or negative cliques (p.\ \pageref{d:posnegcn})	% B period
\item $d/c$ -- slope of a line or move (p.\ \pageref{d:slope-hyp})
\item $(c,d)$ -- coordinates of a move vector $m$ (p.\ \pageref{d:mr})
\item $c_i,d_i$ -- fixation equation constants (p.\ \pageref{d:fixnconst})
\item $e$ -- edge of a (signed) graph (p.\ \pageref{d:edge})	% B period
\item $e_{ij}^\epsilon$ -- edge of a signed graph with sign $\epsilon$ and end nodes $v_i, v_j$ (p.\ \pageref{d:sgdedge})	% B period
\item $g(\S)$ -- function on a signed graph (p.\ \pageref{L:sclique})	% B period
\item $k,l$ -- indices in the clique graph (p.\ \pageref{d:ABkl})	% B period
\item $m = (c,d)$ -- basic move (p.\ \pageref{d:mr})
\item $n$  -- size of a square board
\item $o_\pB(q;n)$ -- \# of nonattacking labelled configurations (p.\ \pageref{d:distattacks})
\item $p$ -- period of a quasipolynomial (p.\ \pageref{d:p})
\item $q$ -- \# of pieces on a board (p.\ \pageref{d:q})
\item $q$ -- \# of nodes in a (signed) graph (p.\ \pageref{d:qS})	% B period
\item $r,s$ -- indices of fixations (p.\ \pageref{E:fixation-equations})	% B period
\item $u_\pB(q;n)$ -- \# of nonattacking unlabelled configurations (p.\ \pageref{d:indistattacks})
\item $v$ -- node in a signed graph (p.\ \pageref{d:node})	% B period
\item $z=(x,y)$, $z_i=(x_i,y_i)$ -- piece positions (p.\ \pageref{d:z})
\end{enumerate}
\smallskip
\begin{enumerate}[]
\item $\ba$, $\bfb$ -- clique vectors (p.\ \pageref{bf a})
\item $\bc$, $\bd$ -- fixation vectors (p.\ \pageref{bf c})
\item $\bx$, $\by$ -- $x$, $y$ coordinate vectors of a configuration (p.\ \pageref{bf x})
\item $\bz = (z_1,\ldots,z_q)$ -- a configuration in $\bbR^{2q}$ (p.\ \pageref{d:bfz})
\end{enumerate}
\smallskip
%l.c. Greek
\begin{enumerate}[]
\item $\gamma_{i}(n)$ -- coefficient of $u_\pB$ (p.\ \pageref{d:gamma})
\item $\epsilon$ -- sign of an edge (p.\ \pageref{d:edgesign})
\item $\xi$ -- cyclomatic number (p.\ \pageref{d:cyclo})	
\item $\sigma$ -- sign function of the signed graph $\Sigma$ (p.\ \pageref{d:sigma})	
\end{enumerate}
\bigskip
%Text
\begin{enumerate}[]
\item $\rk$ -- rank of the incidence matrix of a (signed) graph (p.\ \pageref{d:rk})
\end{enumerate}
%\end{minipage}
\bigskip
%
%\begin{minipage}[t]{3.5in} \vspace{0in}
%Uppercase Roman
\begin{enumerate}[]
\item $A_k,B_l$ -- positive, negative cliques (p.\ \pageref{d:clique})	% B period
\item $C(\S)$ -- clique graph (p.\ \pageref{d:CS})	% B period
\item $C_\cU = C(\S(\cU))$ -- clique graph (p.\ \pageref{d:cliq})	% B period
\item $E$ -- edge set of a graph (p.\ \pageref{d:graph})	% B period
\item $G$ -- matroid on ground set $E$ (p.\ \pageref{d:bSigma})	% B period
\item $J_\bz$ -- set of vertices $z_i$ in the configuration $\bz$ such that $z_i$ is integral  (p.\ \pageref{d:J}) % B period
\item $K_q$ -- complete graph (p.\ \pageref{d:K})
\item $M$ -- incidence matrix $\Eta(\Ps_\bz)$ (p.\ \pageref{d:M}) 	
\item $N$ -- node set of a graph (p.\ \pageref{d:graph})
\end{enumerate}
\bigskip
%Geometry
\begin{enumerate}[]
\item $\cA_{\pB}$ -- move arrangement of bishops $\pB$ (p.\ \pageref{d:AP})
\item $\cB$ -- board polygon $[0,1]^q$ (p.\ \pageref{d:Board})
\item $\cH_{ij}^\pm$ -- bishops hyperplane (p.\ \pageref{T:bishopsperiod})	% B period
\item $(\cP,\cA)$ -- inside-out polytope (p.\ \pageref{d:AP})
\item $\cS$ -- subarrangement  (p.\ \pageref{d:cS})
\item $\cU$ -- subspace in the intersection lattice of an arrangement (p.\ \pageref{d:cU}) %Cite PART I$_O$
\end{enumerate}
\bigskip
\begin{enumerate}[]
\item $\bbR$ -- real numbers
\item $\bbZ$ -- integers
\end{enumerate}
\bigskip
%Pieces
\begin{enumerate}[]
\item $\pB$ -- bishop (p.\ \pageref{d:B})
\end{enumerate}
\bigskip
%Capital Greek
\begin{enumerate}[]
\item $\Alpha(\S), \Beta(\S)$ -- sets of positive, negative cliques (p.\ \pageref{d:clique})	
\item $\G$ -- graph (p.\ \pageref{d:graph})	
\item $\Eta$ -- incidence matrix (read ``Eta'') of a (signed) graph (p.\ \pageref{d:Eta})	
\item $\S$ -- signed graph (p.\ \pageref{d:sg})	
\item $\S(\cU)$ -- signed graph of the bishops subspace $\cU$ (p.\ \pageref{d:bishopsgraph})	
\item $\Ps_\bz$ -- subgraph for a vertex $\bz$ (p.\ \pageref{d:Psiz})	
\end{enumerate}
%\end{minipage}

%%%%%%%%%%%%%%%%%%%%%%%%%%%%%%%%%%%%%%%%%%%%%%%%%%%%%%%%%%%%
\newpage

\newcommand\otopu{$\overset{\circ}{\textrm u}$}

\end{document}